\documentclass[11pt]{article}
\usepackage{amsfonts}
\usepackage{amsmath}
\usepackage{mathrsfs}
\usepackage{amssymb}
\usepackage{amsmath,color}
\usepackage{amsthm}
\usepackage{amstext}
\usepackage{amsopn}
\usepackage{texdraw}
\usepackage{graphicx}
\usepackage{hyperref}

\usepackage{amsmath}
\usepackage{amsthm}
\usepackage{amstext}
\usepackage{amsopn}
\usepackage{graphicx}
\usepackage{subfigure}
\usepackage{cite}

\input xy
\xyoption{all}

\oddsidemargin 0in
\topmargin -0.2in
\textwidth 6in \textheight 8.5in
\baselineskip=20pt
\parskip=2mm
\parindent=20pt
\newtheorem{theorem}{Theorem}[section]
\newtheorem{lemma}{Lemma}[section]

\theoremstyle{definition}
\newtheorem{definition}{Definition}[section]

\theoremstyle{remark}

\numberwithin{equation}{section}

\newcommand{\ba}{\begin{array}}
\newcommand{\ea}{\end{array}}

\newcommand{\ds}{\displaystyle}
\begin{document}
\date{}

\newcommand{\pe}{\psi}
\def\d{\delta}
\def\ds{\displaystyle}
\def\e{{\epsilon}}
\def\eb{\bar{\eta}}
\def\enorm#1{\|#1\|_2}
\def\Fp{F^\prime}
\def\fishpack{{FISHPACK}}
\def\fortran{{FORTRAN}}
\def\gmres{{GMRES}}
\def\gmresm{{\rm GMRES($m$)}}
\def\Kc{{\cal K}}
\def\norm#1{\|#1\|}
\def\wb{{\bar w}}
\def\zb{{\bar z}}

\newcommand{\Pa}{\partial}

% some definitions of bold math italics to make typing easier.
% They are used in the corollary.

\def\bfE{\mbox{\boldmath$E$}}
\def\bfG{\mbox{\boldmath$G$}}
%\begin{document}

\title {\LARGE \bf Strong Allee Effect in A Stochastic Logistic Model with Mate Limitation and Stochastic Immigration}
\author{Chuang Xu\thanks{Corresponding author. Email: cx1@ualberta.ca.
Department of Mathematics,
Harbin Institute of Technology, Harbin, Heilongjiang 150001,
P. R. China and
Department of Mathematical and Statistical Sciences,
University of Alberta, Edmonton, Alberta,
T6G 2G1 Canada.}}
\maketitle

{\bf Abstract.}
We propose a stochastic logistic model with mate limitation and stochastic immigration. Incorporating stochastic immigration into a continuous time Markov chain model, we derive and analyze the associated master equation. By a standard result,  there exists a unique globally stable positive stationary distribution. We show that such stationary distribution admits a bimodal profile which implies that a strong Allee effect exists in the stochastic model. Such strong Allee effect disappears and threshold phenomenon emerges as the total population size goes to infinity. Stochasticity vanishes  and the model becomes deterministic as the total population size goes to infinity. This implies that there is only one possible fate (either to die out or survive) for a species constrained to a specific community and whether population eventually goes extinct or persists does not depend on initial population density but on a critical inherent constant determined by birth, death and mate limitation. Such a conclusion interprets differently from the classical ordinary differential equation model and thus a paradox on strong Allee effect occurs. Such paradox illustrates the diffusion theory's dilemma.

{\bf Keywords.} Strong Allee effect, logistic model,  stochastic immigration, bistability, diffusion theory's dilemma, positive stationary distribution.

{\bf MSC.} 60J27, 60J80, 92D25, 92D40.

\section{Introduction}
\label{intro}
\textit{Strong Allee effect} is a phenomenon that the population goes extinct (or grows to the carrying capacity), when it is below (or above) a critical population density, i.e., there exists a minimum density for a single species to persist \cite{D89}. There are two main subcategories for Allee effects- \textit{component Allee effect}, the positive relationship between any measurable component of individual fitness and population density, and \textit{demographic Allee effect}, the positive relationship between the overall individual fitness and population density. Many mechanisms can cause Allee effects: ecological mechanisms, for example, mate limitation, cooperative defense, cooperative feeding, environmental conditioning; genetics mechanisms, for instance, inbreeding depression, and demographic stochasticity, related to the random events of birth rates, death rates, sex ratio, and dispersal \cite{A1,A2,AB,D89,DK,F,G,H,KDLD,L,SSF,W}.

There are lots of studies on models with Allee effects, including both deterministic and stochastic models \cite{AAC,A,CCG,F,GS,GHT,KL,LK,MMA,S,SS,T,VL,WSRW,ZLW}. A logistic model with strong Allee effect generally admits the following dynamics: there exists an unstable equilibrium (the critical density), below which the solution converges to the stable trivial equilibrium, while above which it converges to the stable carrying capacity equilibrium. The critical density determines the minimal number for a species to maintain itself in nature \cite{A2}.

However, when the population size is small, stochastic forces play such a significant role that the assumption of homogeneous mixing is no longer valid. It seems more practical to consider a stochastic model. Dennis \cite{D89} incorporated demographic variability alone and environmental variability alone into deterministic single species models of Allee effects with ordinary density dependence and harvesting. Later, he considered the effect of the combination of the two stochastic forces \cite{D02}. Both discrete birth(-death) processes (modelled by a \textit{master equation}, or \textit{Kolmogorov forward equation}) and their continuous approximations-the diffusion processes (modelled by a stochastic differential equation, or its associated \textit{Fokker-Planck equation}) were used to analyze the stochastic version of Allee effects.

Schreiber \cite{SS} investigated a discrete time single species model with an Allee effect due to predator satiation and mate limitation. Ackleh, Allen and Carter  \cite{AAC} studied a multi-patch stochastic population model with an Allee effect. However, there seems almost no paper addressing  \textit{stochastic (strong) Allee effects}, i.e., there are both positive probabilities for a population of finite size to go extinct and persist even in a single species model. For a formal definition of stochastic strong Allee effect, we refer the reader to Section 4.

It is generally believed that demographic stochasticity enhances the chance of population extinction \cite{L}. For the classical growth model in term of a continuous time Markov chain (CTMC), the extinction state is absorbing; in other words, the population eventually goes extinct with probability one, though the mean time to extinction (MTE) can be exponentially long \cite{D89,NG}. In such cases, there is no chance for the species to survive, and thus Allee effects disappear. However, there is plenty of empirical evidence of thresholds for Allee effects, namely there should be threshold abundances necessary for a species' maintenance \cite{B,D02,GSCR}. Although diffusion process approximation has been commonly used to  depict Allee effects in stochastic models \cite{D89,D02,L}, as  Ovaskainen and  Meerson \cite{OM} pointed out, it remains a challenge to understand population extinction, if the random growth process with discrete states is properly described by a master equation.

Then questions follow: Is a species doomed to extinction irrespective of the initial population size when stochasticity is taken into consideration in CTMC models? Can strong Allee effect exist in stochastic models in term of CTMC? If it could exist, then how to describe it? Are there still notions of  \lq carrying capacity\rq\ and \lq critical density\rq\ as in deterministic models? Which state is it more likely for the chain to eventually stay in, the extinction state, or the carrying capacity state? Is this result consistent with the strong Allee effect demonstrated in deterministic models?

In this paper, we tentatively address the above questions by considering a logistic growth model with mate limitation and stochastic immigration (i.e., spontaneous cases where individuals outside the community move in). With the incorporation of stochastic immigration of species into the population,  there is a positive chance for the species to survive eventually (instead of dying out with probability one because of the absorbing state of the classical CTMC). By a standard result in
\cite{Kam}, there exists a unique  globally asymptotically stable positive stationary distribution (PSD) of the associated master equation. We prove a \textit{stochastic bistability} result: such PSD exhibits a bimodal profile with one peak at the extinction state and another around the deterministic carrying capacity for finite population size. Equivalently, there are both positive probabilities for the population to go extinct and persist, respectively. In this way, we discover the stochastic strong Allee effect in this stochastic model. Further, we prove that such stochastic strong Allee effect disappears and a threshold result holds as the total population size goes to infinity: there exists a critical value, below which the population goes extinct with probability one, and above which persists at the carrying capacity level with probability one. This result establishes a connection between the stochastic logistic model and its deterministic counterpart.  In the deterministic model, strong Allee effect exists irrespective of the total population size: whether the population eventually goes extinct or persists only depends on the initial population density. However, from the perspective of probability, strong Allee effect vanishes if homogeneity is assumed when the population size of the stochastic model goes to infinity: whether the population eventually goes extinct or persists depends not on the initial population density but on a critical constant (related to birth, death and mate limitation) independent of the total population size and the stochastic immigration. Thus a paradox on strong Allee effect occurs. Such a paradox illustrates the \lq\lq diffusion theory's dilemma\rq\rq\ (simply speaking, the diffusion process fails to provide a good global approximation for (nonlinear) stochastic dynamics in term of a master equation as the system size goes to infinity) discovered in some biochemical systems with multiple (stable) equilibria, say the Keizer's paradox \cite{Q,VQ}. In our stochastic model, there is only one equilibrium (either the extinction state or the carrying capacity state) which is globally attractive with full probability as the population size $N\to\infty.$ Whereas in a deterministic model described by a one dimensional ordinary differential equation (ODE) (or a Fokker-Plank equation (FPE) associated with a stochastic differential equation (SDE)), there always exist two locally attractive equilibria (instead of only one globally attractive equilibrium), which is the bistable feature of the strong Allee effect. Thus the corresponding deterministic model with a large population size cannot well approximate the stochastic model in term of a CTMC. For more detailed explanation of the diffusion theory's dilemma, we refer the reader to \cite{Q}.

We would like to mention that, with respect to the rigorous analysis of logistic models in term of a CTMC, J.R. Chazottes, P. Collet and S. M\'{e}l\'{e}ard recently obtained rigorous estimates
of the quasi-stationary distribution and of the MTE in \cite{CCM}.

We formulate our logistic model with mate limitation and stochastic immigration and derive its master equation in the next section. In Section 3,  we state the existence, uniqueness, global stability as well as an explicit formula for the positive stationary distribution. In Section 4, we present our result on stochastic strong Allee effect for finite population size. In Section 5, we show stochastic strong Allee effect disappears and a limit threshold result of population extinction and persistence exists as the population size goes to infinity.
\section{Model formulation}
\label{sec:1}
In this section, we derive a logistic model with mate limitation and stochastic immigration. The model can be described as a birth-death process $\{X(t),\ t\geqslant0\}$ with finite state space $\{0,1,\cdots,N\}.$ For state $X=i,$ the birth rate $b(i)$ and death rate $d(i)$ are specified as follows:
\begin{equation}\label{b}
b(i)=\begin{cases}
\lambda i(1-\delta_1i/N)+\alpha_i(N)\cdot(N-i),\ i=0,1,\cdots,N-1,\\
  0,\ i=N,
\end{cases}
\end{equation}
and \begin{equation}\label{d}
d(i)=\mu i\Big(1+\delta_2i/N+\delta_3\theta/(\theta +i/N)\Big),\ i=0,1,\cdots,N.
\end{equation}
Here $N$ is the largest number of population the area could hold (and thus $b(N)$ is set to be zero). Generally it is greater than the carrying capacity--the largest number of population that may finally survive (which may also depend on many other factors, for instance, the food resource).

The first term $\lambda i(1-\delta_1i/N)$ in $b(i)$ accounts for a general density dependent net birth rate  and the second term $\alpha_i(N)\cdot(N-i)$ represents the stochastic immigration. Note that $\alpha_i(N),$ the stochastic immigration rate is a function in both state $i$ and size $N$ of the total population. As the number of individuals increases, the remaining resources for in-coming individuals including, for instance, food and living space, decrease. When $i=N,$ there is no chance for other population to move in because all the resources are occupied. The term $\mu i(1+\delta_2i/N)$ in $d(i)$ represents a general density dependent net death rate.

Here $\lambda,\ \mu$ are per-individual natural birth rate and natural death rate, respectively; $\delta_1$ and $\delta_2$ are nonnegative constants with $\delta_1\le1$ in order for a nonnegative birth rate $b(i)$ regardless of the stochastic immigration; constant $\delta_3$ is positive. The positive constant $\theta\le1$ is the population density at
which the per-individual birth rate is half of what it
would be if coatings were not limiting \cite{D02}. The central assumption of the logistic is that the difference of the per-individual birth and death rate is a linear declining function of $i/N.$ In our model, $(\lambda-\mu)-(\delta_1\lambda+\delta_2\mu)i/N$ corresponds to such difference. Since $\frac{i/N}{\theta+i/N}$ is the probability of mating, $1-\frac{i/N}{\theta+i/N}=\frac{\theta}{\theta+i/N}$ is the probability of not mating.  The general mate limitation term $\mu i(\delta_3\theta)/(\theta +i/N)$ in $d(i)$ then stands for the reduction of reproduction  due to mating shortage \cite{D89}. For $\delta_3=\lambda/\mu,$ such form of mate limitation reduces to the one investigated in \cite{D89} (see (3.28) on p. 506 in \cite{D89}) for a deterministic counterpart. The mate limitation term used in \eqref{d} can also represent harvesting or predation of Holling type II \cite{Br,D89,Hu,LJH,May,P}.

Since the immigration term $\alpha_i(N)\cdot(N-i)$ may vanish as $N\to\infty$ (as assumption (H) indicates below), we should omit it in the deterministic counterpart. In fact, one of the main goals of this paper is to compare the dynamics of the limit of the stochastic model with the deterministic model as $N\to\infty.$ Thus the corresponding deterministic model (without stochastic immigration) is
\begin{equation}
  \label{ode}
  \frac{dX}{dt}=\lambda X\Big(1-\delta_1X/N\Big)-\mu X\Big[1+\delta_2X/N+\delta_3\theta/(\theta+X/N)\Big].
\end{equation}

Dividing $N$ on both sides of \eqref{ode}, we obtain the ordinary differential equation for the survival rate $x:=X/N:$

\begin{equation}
  \label{ode2}
  \frac{dx}{dt}=\lambda x\Big(1-\delta_1x\Big)-\mu x\Big[1+\delta_2x+\delta_3\theta/(\theta+x)\Big].
\end{equation}

Note that equation \eqref{ode2} is independent of $N.$
We assume \eqref{ode2} admits two positive equilibria $x_-^*<x_+^*$ so that strong Allee effect exists in \eqref{ode}, i.e.,

\vskip 0.3cm
\noindent
$(\mathbf{A}1)$ \ \  $1+\theta(R_0\delta_1+\delta_2)<R_0<1+\delta_3,\ \delta_1^2+\delta_2^2>0$ and the discriminant $\Delta:=\Big[1-R_0-\theta(R_0\delta_1+\delta_2)\Big]^2-4\theta\delta_3(\delta_1R_0+\delta_2)>0$
where \begin{equation}\label{R}
  R_0:=\frac{\lambda}{\mu}
\end{equation} is the \textit{basic reproduction ratio} \cite{Na} and \begin{equation}
  \label{+-}
  x_{\pm}^*=\frac{1}{2(R_0\delta_1+\delta_2)}\Big[R_0-1-(R_0\delta_1+\delta_2)\theta\pm\sqrt{\Delta}\Big]
\end{equation}

In order for the interval $[0,N]$ to be positively invariant under the flow of \eqref{ode}, we further assume
the carrying capacity equilibrium
\vskip 0.3cm
\noindent
$(\mathbf{A}2)$ \ \  $x_+^*\leqslant1.$
\vskip 0.3cm

Moreover, as for the stochastic immigration rates, we assume that:

\vskip 0.3cm
\noindent
$(\mathbf{H})$ \ \  $0\leqslant\alpha_i(N)\leqslant\frac {\mu}N, \ i=0,1,2,\cdots,N-1$.

\vskip 0.3cm
Since $\alpha_i(N)=O(\frac{1}{N}),$ the stochastic immigration noise is negligible as the population size $N\to\infty$. We can see that the extinction state $X=0$ is no longer absorbing as in the classical formulation of the logistic model, if $\alpha_0(N)> 0.$ We remark that when $\alpha_i(N)=0$ for $i=0,1,2,\cdots, N-1$ and $\delta_3=0,$ our model reduces to the general Verhulst logistic CTMC model \cite{Na}. When $\alpha_i(N)=\delta_1=\delta_2=\delta_3\theta=0,$ $i=0,\cdots,N-1,$ our model then reduces to a closed susceptible to infective to susceptible (SIS) epidemic model \cite{WD}.

For notational convenience, let
\begin{equation}\label{r1j}
R_{1i}=\frac {N\alpha_i(N)}{\mu}, \quad i=0,1,\cdots N.
\end{equation}
Then, assumption $({\rm H})$ is equivalent to $$0\leqslant R_{1i}\leqslant1,\ \text{for}\ i=0,1,2,\cdots,N-1.$$

For $i=0, 1, 2, \cdots, N,$ let
$$
p_i(t)=\text{Prob}\{\, X(t)=i\,\}
$$
be the probability that the population is at the state $X=i$, or equivalently, the
probability that there are $i$ individuals surviving in the total population. Then
$$
p(t)=(p_0(t), p_1(t), \cdots, p_N(t))
$$
is the probability distribution at time $t.$
According to \cite{G,K}, the master equation for our stochastic logistic
model with mate limitation and stochastic immigration can be written as
\begin{equation}\label{1-1}
\dot{p}=Qp,
\end{equation}
with
\begin{equation}\label{q}
Q=\begin{bmatrix}
  -b(0)&d(1)&0&\cdots&0\\
    b(0)&-[b(1)+d(1)]&d(2)&\cdots&0\\
        0&b(1)&-[b(2)+d(2)]&\cdots&0\\
      0&0&b(2)&\cdots&0\\
      \vdots&\vdots&\vdots&\vdots&\vdots\\
      0&0&0&\cdots&d(N)\\
      0&0&0&\cdots&-d(N)\\
\end{bmatrix}.
\end{equation}

Hereafter, master equation \eqref{1-1} will be studied as a system of differential equations in the positively invariant bounded subset of the
phase space $\mathbb{R}^{N+1}_+:$
\begin{equation}\label{gamma}
\Gamma= \Big\{\,(p_0, p_1, \cdots, p_N)\in \mathbb{R}^{N+1}_+ \quad \Big| \quad \sum_{i=0}^N p_i=1\, \Big\}.
\end{equation}

For the deterministic model \eqref{ode}, it is known that $0$ and $x_+^*N$ are two stable nodes, and $x_-^*N$ is an unstable node. In the feasible region $[0,N],$ trajectories of \eqref{ode} converge to $0$ if initiated below $x_-^*N$ and converge to $x_+^*N$  if initiated above $x_-^*N.$ Such bistable dynamics correspond to the strong Allee effect. The following sections target at investigating the stochastic strong Allee effect in our stochastic logistic model as we mention in the Introduction.
\section{Positive stationary distribution (PSD)}
\label{sec:2}
Let  $A=(a_{ij})_{(N+1)\times (N+1)}$ be the nonnegative matrix with entries
\begin{equation}\label{0-3}
a_{ij}=\begin{cases}
  \lambda j(1-\delta_1j/N)+\alpha_j(N)\cdot(N-j), \quad  &i=j+1,\\
  \mu j[1+\delta_2j/N+(\delta_3\theta N)/(\theta N+j)], \quad   &i=j-1,\\
  0, \quad &i\neq j\pm1.
\end{cases}
\end{equation}
Then $Q=-L(A)$, where $L(A)={\rm diag}(d_1,\cdots, d_n)-A$
is the algebraic Laplacian matrix of $A$ with $d_j$ representing the sum of the $j$-th column of $A$ \cite{W}.
A \textit{stationary distribution} $p^s\in\Gamma$ is an equilibrium of \eqref{1-1}.

Now we claim the standard results on the existence, uniqueness and global stability of a positive stationary distribution of \eqref{1-1} \cite{Kam}.

\vskip 0.3cm
\begin{theorem}[\textbf{Existence, Uniqueness and Global Stability of PSD}]\label{th0}
Suppose that $\alpha_0(N)>0.$ Then master equation \eqref{1-1} has a unique positive stationary distribution $p^s=(p_0^s,p_1^s,\cdots,p_N^s)$ and $p^s$ is globally asymptotically stable in $\Gamma.$
\end{theorem}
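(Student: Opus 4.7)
The plan is to treat \eqref{1-1} as the forward Kolmogorov equation of a finite irreducible birth-death CTMC and then invoke the classical spectral-ergodic theory of Markov chains. I would proceed in three steps: confirm irreducibility, construct $p^s$ explicitly from detailed balance, and read off global asymptotic stability from the spectrum of $Q$.

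First I would check that, under the standing assumptions together with the hypothesis $\alpha_0(N)>0$, one has $b(i)>0$ for all $i=0,1,\ldots,N-1$ and $d(i)>0$ for all $i=1,\ldots,N$. The death rate is manifestly positive for $i\geq 1$ since $\mu,\delta_3,\theta>0$. For the birth rate, when $i\geq 1$ the linear piece $\lambda i(1-\delta_1 i/N)$ is nonnegative (as $\delta_1\leq 1$ and $i\leq N-1$) and the immigration piece adds something nonnegative, while at $i=0$ the hypothesis gives $b(0)=\alpha_0(N)\cdot N>0$. Hence every neighbouring pair of states communicates in both directions and the chain is irreducible on $\{0,1,\ldots,N\}$.

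Next I would construct $p^s$ explicitly. Writing $Qp^s=0$ row by row, the first row gives $b(0)p^s_0=d(1)p^s_1$; telescoping into subsequent rows yields the detailed balance relations
\begin{equation*}
b(i)\,p^s_i \;=\; d(i+1)\,p^s_{i+1},\qquad i=0,1,\ldots,N-1,
\end{equation*}
and hence
\begin{equation*}
p^s_i \;=\; p^s_0\prod_{j=0}^{i-1}\frac{b(j)}{d(j+1)},\qquad i=1,\ldots,N,
\end{equation*}
with $p^s_0$ pinned by the normalization $\sum_{i=0}^N p^s_i=1$. Since each factor $b(j)/d(j+1)$ is strictly positive by the first step, $p^s\in\Gamma$ with every component positive, giving existence and positivity; uniqueness follows because $Q$ is an irreducible Metzler matrix whose columns sum to zero, so its null space is one-dimensional.

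Finally, for global asymptotic stability in $\Gamma$, note that the vanishing of the column sums of $Q$ makes $\Gamma$ invariant under $e^{tQ}$. Applying the Perron–Frobenius theorem to the nonnegative irreducible matrix $cI+Q$ for sufficiently large $c$, the simple Perron eigenvalue corresponds to the eigenvalue $0$ of $Q$ while every other eigenvalue of $Q$ has strictly negative real part. Consequently $e^{tQ}p(0)\to p^s$ exponentially as $t\to\infty$ for every $p(0)\in\Gamma$, which is exactly the claimed global asymptotic stability. The only subtlety in this program is pinpointing the exact role of the assumption $\alpha_0(N)>0$ in the irreducibility step—without it, state $0$ becomes absorbing and no positive stationary distribution can exist—but beyond that the argument is a direct application of the standard theory cited in \cite{Kam}.
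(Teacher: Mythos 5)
Your proposal is correct and follows exactly the standard route the paper itself relies on: the paper offers no proof of Theorem~\ref{th0}, simply invoking the classical theory of finite irreducible birth--death chains from \cite{Kam}, and your three steps (irreducibility via $\alpha_0(N)>0$ and $\delta_1\leqslant 1$, detailed balance giving the product formula, Perron--Frobenius for the spectral gap) are precisely that standard argument. Your detailed-balance formula moreover reproduces the paper's explicit expression \eqref{3-2}--\eqref{3-3} in Theorem~\ref{th10}.
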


Next, we present a formula for the positive stationary distribution $p^s.$ For a general formula for stationary distribution of a birth-death process, we refer the reader to p.12 in \cite{GR}.

\vskip 0.3cm
\begin{theorem}[\textbf{Formula for PSD}]\label{th10} Assume that $\alpha_0(N)>0$. Let $p^s=(p_0^s, p_1^s, \cdots, p_N^s)$ be
the unique positive stationary distribution of \eqref{1-1}. Then
 \begin{equation}\label{3-2}
 p_i^s=p_0^s\prod_{j=0}^{i-1}\frac{R_0j(1-\delta_1j/N)+R_{1j}(N-j)/N}{(j+1)\Big[1+\frac{\delta_2(j+1)}{N}+\frac{\delta_3\theta}{\theta+(j+1)/N}\Big]}, \quad i=1,\cdots, N,
 \end{equation}
with
\begin{equation}\label{3-3}
 p_0^s=\Big(1+\sum_{i=1}^N\prod_{j=0}^{i-1}\frac{R_0j(1-\delta_1j/N)+R_{1j}(N-j)/N}{(j+1)\Big[1+\frac{\delta_2(j+1)}{N}+\frac{\delta_3\theta}{\theta+(j+1)/N}\Big]}\Big)^{-1};\end{equation}
where $R_0$ and $R_{1j}$ ($j=0,\cdots,N-1$) are defined in \eqref{R} and \eqref{r1j}, respectively.
\end{theorem}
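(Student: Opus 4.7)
The plan is to exploit the tridiagonal (birth–death) structure of $Q$ so that the stationarity equation $Qp^s=0$ reduces to the classical detailed balance recursion $b(j)p_j^s=d(j+1)p_{j+1}^s$, from which \eqref{3-2} follows by telescoping and \eqref{3-3} by normalization. Existence and uniqueness of the positive solution are already guaranteed by Theorem~\ref{th0}, so nothing has to be proved about solvability; the task is purely to write down the right form.

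First I would read off the scalar equations of $Qp^s=0$ from \eqref{q}. The top row gives
$-b(0)p_0^s+d(1)p_1^s=0$, i.e.\ $b(0)p_0^s=d(1)p_1^s$. I then establish the detailed balance identity
\[
b(j)p_j^s=d(j+1)p_{j+1}^s,\qquad j=0,1,\dots,N-1,
\]
by induction on $j$: assuming it at step $j-1$, the interior row-$j$ equation
$b(j-1)p_{j-1}^s-[b(j)+d(j)]p_j^s+d(j+1)p_{j+1}^s=0$
collapses, after substituting $b(j-1)p_{j-1}^s=d(j)p_j^s$, to $b(j)p_j^s=d(j+1)p_{j+1}^s$. (The last row just gives the same identity at $j=N-1$, so it provides a consistency check rather than a new constraint.)

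Second, telescoping the recursion gives
\[
p_i^s=p_0^s\prod_{j=0}^{i-1}\frac{b(j)}{d(j+1)},\qquad i=1,\dots,N.
\]
Plugging in \eqref{b}–\eqref{d}, I divide numerator and denominator of each factor by $\mu$. The numerator becomes
$R_0 j(1-\delta_1 j/N)+\alpha_j(N)(N-j)/\mu$, and using $R_{1j}=N\alpha_j(N)/\mu$ from \eqref{r1j} this is exactly $R_0 j(1-\delta_1 j/N)+R_{1j}(N-j)/N$. The denominator becomes $(j+1)\bigl[1+\delta_2(j+1)/N+\delta_3\theta/(\theta+(j+1)/N)\bigr]$ after writing $\delta_3\theta N/(\theta N+(j+1))=\delta_3\theta/(\theta+(j+1)/N)$. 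This yields \eqref{3-2}.

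Finally, I determine $p_0^s$ from the normalization $\sum_{i=0}^N p_i^s=1$, which forces \eqref{3-3}. The positivity $p_0^s>0$ (and hence $p_i^s>0$ for all $i$) is guaranteed because the hypothesis $\alpha_0(N)>0$ makes the $j=0$ factor strictly positive and all subsequent factors are nonnegative with strictly positive denominators, so no term in the product vanishes; combined with the uniqueness half of Theorem~\ref{th0}, the distribution so produced is \emph{the} PSD. There is no real obstacle here: the only place one must be careful is the induction step in the derivation of detailed balance, since the tridiagonal cancellation is what makes the birth–death case so much simpler than a general CTMC, and verifying that the endpoint rows $0$ and $N$ of $Q$ are consistent with the interior recursion.
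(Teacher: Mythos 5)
Your proposal is correct and is exactly the standard detailed-balance derivation for a birth--death chain that the paper itself relies on: the paper gives no proof of Theorem~\ref{th10}, simply citing the general stationary-distribution formula for birth--death processes (p.~12 of \cite{GR}), and your induction-plus-telescoping argument is precisely what underlies that citation. The only cosmetic point is that the factors for $1\le j\le N-1$ are in fact strictly positive (since $\delta_1\le 1$ gives $1-\delta_1 j/N>0$ for $j<N$), not merely nonnegative, which is what actually guarantees $p_i^s>0$ for all $i$.
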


Although from Theorem~\ref{th0} that the asymptotic dynamics does not depend on the initial probability distribution, which means that strong Allee effect in the deterministic sense disappears, it still makes sense to investigate whether there are both positive probabilities that the population goes extinct and persists respectively, which can be identified as a stochastic version of strong Allee effect for a CTMC growth model.
\section{Stochastic strong Allee effect for finite population}
\label{sec:3}
In this section, we investigate  strong Allee effect in the stochastic model \eqref{1-1} via the profile of the PSD. The population is \textit{extinct in probability} if the profile peaks at some $i_1=i_1(N)$ such that $\lim_{N\to\infty}\frac{i_1}{N}=0.$ The population is \textit{persistent in probability} if the profile peaks at some $i_2=i_2(N)$ such that $\lim_{N\to\infty}\frac{i_2}{N}>0.$ One should notice that if $\lim_{N\to\infty}\frac{i_1}{N}$ (or $\lim_{N\to\infty}\frac{i_1}{N},$ respectively) does not exist, then it makes nonsense to talk about extinction in probability (or persistence in probability, respectively) according to our definition. If the population is both extinct in probability and persistent in probability, then model \eqref{1-1} is said to admit a \textit{stochastic strong Allee effect}.

We first investigate the profile of PSD of model \eqref{1-1} and then show that stochastic  strong Allee effect exists in model \eqref{1-1} for finite population size.
\vskip 0.2cm
\begin{theorem}[\textbf{Profile of PSD}]\label{th1} Assume that $\alpha_0(N)>0$. Let $p^s=(p_0^s,p_1^s,\cdots,p_N^s)$
be the unique positive stationary distribution of \eqref{1-1}. Then $p^s$ is bimodal. Specifically,
there exist $0\leqslant i_-<i_+\leqslant N$ and $m\in\mathbb{N}^+$ independent of $N$ such that for sufficiently large $N$,
  $p_i^s$ is decreasing for $0\le i\le i_--m$ and $i_++m\le i\le N$ while increasing for $i_-+m\le i\le i_+-m.$ Moreover, $p_{i_+}^s=\underset{i_+-m\leqslant i\leqslant i_++m}{\max}\{p_i^s\}$ and $p_{i_-}^s=\underset{i_--m\leqslant i\leqslant i_-+m}{\min}\{p_i^s\};$ $i_{\pm}$ have the following asymptotic expansions
 \begin{equation}\label{imax}
 \frac{i_{\pm}}{N}=x_{\pm}^*+O\Big(\frac{1}{N}\Big), \quad \text{as}\quad\ N\to\infty,
 \end{equation}
 where $x_{\pm}^*$ are defined in \eqref{+-}.
\end{theorem}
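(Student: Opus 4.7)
The plan is to analyze the one-step ratio $\rho_i := p^s_{i+1}/p^s_i$ read off from formula (3.2), since $p^s_i$ is nondecreasing at index $i$ precisely when $\rho_i \ge 1$. From (3.2),
\[
\rho_i = \frac{R_0\, i(1-\delta_1 i/N) + R_{1i}(N-i)/N}{(i+1)\left[1 + \delta_2(i+1)/N + \delta_3\theta/(\theta+(i+1)/N)\right]}.
\]
Setting $x = i/N$ and clearing denominators, $\rho_i - 1$ has the same sign as $N[F(x) + \varepsilon_N(x)]$, where
\[
F(x) := x\left[R_0(1-\delta_1 x) - 1 - \delta_2 x - \frac{\delta_3\theta}{\theta+x}\right],
\]
and $\varepsilon_N$ collects the $O(1/N)$ errors coming from the immigration term $R_{1i}(N-i)/N$ (which is $O(1/N)$ uniformly in $i$ by $(\mathbf{H})$, since $R_{1i}\le 1$) and from the discrete replacement $i \mapsto i+1$ in the denominator. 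A direct estimate would then give $\sup_{x \in [0,1]} |\varepsilon_N(x)| = O(1/N)$.

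Next I would pin down the zero set of $F$ on $[0,1]$. Multiplying the bracket by $(\theta + x)$ yields, after rearrangement, exactly the quadratic whose nontrivial roots are the positive equilibria of the ODE (2.4); by $(\mathbf{A}1)$ and (3.6) those roots are $x_-^* < x_+^*$, and $(\mathbf{A}2)$ places them in $(0,1]$. Evaluating the bracket at $x=0$ gives $R_0 - 1 - \delta_3 < 0$ by $(\mathbf{A}1)$, so the sign pattern of the underlying quadratic produces $F(x) < 0$ on $(0, x_-^*) \cup (x_+^*, 1]$ and $F(x) > 0$ on $(x_-^*, x_+^*)$. Crucially, $\Delta > 0$ in $(\mathbf{A}1)$ forces $x_\pm^*$ to be simple roots, hence $F'(x_\pm^*) \ne 0$; this non-degeneracy is what makes the ensuing perturbation argument quantitative.

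Since $F$ has non-degenerate zeros at $x_\pm^*$ and $\varepsilon_N = O(1/N)$ uniformly, a standard simple-root perturbation (essentially IFT applied to $(x,\eta) \mapsto F(x) + \eta\,\tilde\varepsilon(x,\eta)$ at $\eta = 1/N$) shows that for large $N$ the perturbed equation $F(x) + \varepsilon_N(x) = 0$ has unique roots $\xi_\pm(N) = x_\pm^* + O(1/N)$ in small neighborhoods of $x_\pm^*$; taking $i_\pm$ to be the nearest integer to $N\xi_\pm(N)$ then delivers (4.1). For the monotonicity claim, on any fixed closed subinterval of $[0,1]$ disjoint from $\{0, x_-^*, x_+^*\}$ the continuous function $F$ is bounded away from zero, so $\mathrm{sgn}(\rho_i - 1) = \mathrm{sgn}\,F(i/N)$ for $N$ large; together with the sign pattern above this gives decrease on $[0, i_- - m] \cup [i_+ + m, N]$ and increase on $[i_- + m, i_+ - m]$, with $m$ chosen as any fixed integer dominating the $O(1)$ width (bounded by a constant times $|F'(x_\pm^*)|^{-1}$, hence independent of $N$) of the transition region around each $i_\pm$. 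The main obstacles, in my view, are (i) keeping $\varepsilon_N$ genuinely uniform on $[0,1]$, particularly near $x=0$ where $i/(i+1)$ is far from $1$ for small $i$ and small-$i$ edge effects must be handled separately from the global asymptotic analysis, and (ii) verifying that $m$ can indeed be taken independent of $N$, which rests entirely on the quantitative simple-root property $g'(x_\pm^*) \ne 0$ coming from $\Delta > 0$.
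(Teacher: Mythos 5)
Your proposal is correct and follows essentially the same route as the paper: both analyze the one-step ratio $p_{i+1}^s/p_i^s$ from \eqref{3-2}, reduce the location of its sign changes to a cubic in $x=i/N$ whose $N\to\infty$ limit has the simple roots $0,\,x_-^*,\,x_+^*$, and use the non-degeneracy at $x_\pm^*$ (your $F'(x_\pm^*)\neq 0$, the paper's $3a(x_\pm^*)^2-2bx_\pm^*+c\neq 0$) to obtain the $O(1/N)$ root corrections and an $N$-independent window width $m$. The only difference is one of bookkeeping: the paper tracks the $i$-dependence of $R_{1i}$ through explicit regular-perturbation expansions of the roots $r_\pm^{(i)}$ and bounds the cardinality of the sets $A_\pm$, whereas you absorb that dependence into a uniform $O(1/N)$ error term before perturbing.
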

\vskip 0.2cm
\begin{proof}
  Using \eqref{3-2}, we have
\begin{equation}\label{ppi}
      \frac{p_{i+1}^s}{p_i^s}=\displaystyle\frac{R_0i(1-\delta_1i/N)+R_{1i}(N-i)/N}{(i+1)\Big[1+\frac{\delta_2(i+1)}{N}+\frac{\delta_3\theta}{\theta+(i+1)/N}\Big]}.
\end{equation}

From \eqref{ppi}, to find $i_{\pm}$, we look for
$i$ such that $p^s_{i-1}\le p_i^s$ and $p_i^s\ge p_{i+1}^s.$ It suffices to consider $i$ for $p^s_{i+1}=p^s_i.$  Note that the cubic equation in $i$
\begin{equation*}
  \frac{R_0i(1-\delta_1i/N)+R_{1i}(N-i)/N}{(i+1)\Big[1+\frac{\delta_2(i+1)}{N}+\frac{\delta_3\theta}{\theta+(i+1)/N}\Big]}=1
\end{equation*}
has two positive roots $r_-^{(i)}<r_+^{(i)}$ and one negative root $r_0^{(i)}.$

In fact, let $x^{(i)}=i/N,$ then $x^{(i)}$ solves the cubic equation
\begin{equation}
  \label{a0}
  \begin{split}
  h^{(i)}_N(x)=&-(R_0\delta_1+\delta_2)x^3-\{[1-R_0+\theta(R_0\delta_1+\delta_2)]+\frac{3\delta_2+R_0\delta_1+R_{1i}}{N}\}x^2\\&-\Big[\theta(1+\delta_3-R_0)+\frac{2\delta_2\theta+2-R_0+R_{1i}\theta-R_{1i}}{N}+\frac{3\delta_2+R_{1i}}{N^2}\Big]x\\&-\Big[\frac{(\delta_3+1-R_{1i})\theta}{N}+\frac{1+\delta_2\theta-R_{1i}}{N^2}+\frac{\delta_2}{N^3}\Big]=0.
\end{split}\end{equation}
 Let $N\to \infty,$ by assumption ({\rm H}), equation \eqref{a0} becomes
\begin{equation}
  \label{a1}
  h(x)=-(R_0\delta_1+\delta_2)x^3-[1-R_0+\theta(R_0\delta_1+\delta_2)]x^2-\theta(1+\delta_3-R_0)x=0,
  \end{equation}
which admits three roots $0,$ $x_{\pm}^*,$ where $x^*_\pm$ are defined in \eqref{+-}. Since $x_-^*\neq x_+^*,$ for sufficiently large $N,$ all the three roots of \eqref{a0} are real. Note that by assumption (H),
$$h^{(i)}_N(0)=-\Big[\frac{(\delta_3+1-R_{1i})\theta}{N}+\frac{1+\delta_2\theta-R_{1i}}{N^2}+\frac{\delta_2}{N^3}\Big]<0.$$ By the monotonicity of $h(x),$ we know that $r_0^{(i)}<0.$

Next using regular perturbation, we have the asymptotic expansion for $r_0^{(i)},$ $r_-^{(i)}$ and $r_+^{(i)}:$
\begin{equation}
  \label{r0}r_0^{(i)}=-\frac{\delta_3+1-R_{1i}}{\delta_3+1-R_0}+O(\frac{1}{N});
\end{equation}
\begin{equation}
  \label{r+-}\begin{split}r_{\pm}^{(i)}=&x_{\pm}^*N\\&-\frac{(3\delta_2+R_0\delta_1+R_{1i})(x_{\pm}^*)^2+(2\delta_2\theta+2-R_0+R_{1i}\theta-R_{1i})x_{\pm}^*+(\delta_3+1-R_{1i})\theta}{3a(x_{\pm}^*)^2-2bx_{\pm}^*+c}\\&+O(\frac{1}{N}),
  \end{split}
\end{equation}
where $a=R_0\delta_1+\delta_2,$ $b=-\Big[1-R_0+\theta(R_0\delta_1+\delta_2)\Big]$ and $c=\theta(1-\delta_3+R_0).$ Note that $3a(x_{\pm}^*)^2-2bx_{\pm}^*+c\neq0.$ In fact, by assumption (A1), $a,\ b,\ c>0.$ Since $a(x_{\pm}^*)^2-bx_{\pm}^*+c=0,$ we have \[
\begin{split}
  &3a(x_{\pm}^*)^2-2bx_{\pm}^*+c\\
  =&3(bx_{\pm}^*-c)-2bx_{\pm}^*+c\\
  =&bx_{\pm}^*-2c\\
  =&\frac{\pm2c\sqrt{b^2-4ac}}{b\mp\sqrt{b^2-4ac}}\neq0.
\end{split}
\]
Hence from \eqref{r+-}, we have
\begin{equation}\label{2-1}
r_-^{(i)}-r_-^{(j)}=\frac{(R_{1j}-R_{1i})(x_-^*-1)(x_-^*+\theta)}{3a(x_-^*)^2-2bx_-^*+c}+O\Big(\frac{1}{N}\Big)
\end{equation}
and
\begin{equation}\label{2-2}
r_+^{(i)}-r_+^{(j)}=\frac{(R_{1j}-R_{1i})(x_+^*-1)(x_+^*+\theta)}{3a(x_+^*)^2-2bx_+^*+c}+O\Big(\frac{1}{N}\Big).
\end{equation}
Consider sets $A_-=\{\lfloor r_-^{(i)}\rfloor+1\ | \ 0\leqslant i\leqslant N \}$  and
$A_+=\{\lfloor r_+^{(i)}\rfloor+1 \ | \ 0\leqslant i\leqslant N\},$ where $\lfloor x\rfloor$ is the floor function of $x$ (i.e., the largest integer not exceeding $x$). Then by \eqref{2-1}, \eqref{2-2} and assumption ({\rm H}), $A_{\pm}$
have no more than $m$ (independent of $N$) elements. Let $i_+\in A_+$ and $i_-\in A_-$ be such that $$p^s_{i_+}\geqslant p_i^s,\
\text{for all}\ i\in A_+,\ \text{and}\  p^s_{i_-}\leqslant p_i^s,$$
for all $i\in A_-$. Then $p_i^s$ is increasing in $[i_-+m,i_+-m]$
and decreasing in $[0,i_--m]$ and $[i_++m,N]$.
This completes the proof of Theorem~\ref{th1}.
\end{proof}

From Theorem~\ref{th1}, we see that the stochastic strong Allee effect does exist in model \eqref{1-1} for finite population size; nevertheless, it is still not clear whether the population is more likely to go extinct or persist, though there are both positive probabilities that these two events occur.
Results of Theorem~\ref{th1} are illustrated by numerical simulations in Figure~\ref{fig1}.
Bistable dynamics of \eqref{1-1} are clearly captured  for finite
population size $N.$ In (a), the PSD has a major peak at $X=0$ and a minor peak at $X=40;$ while in (b), the PSD has a major peak at $X=35$ and a minor peak at $X=0,$ which matches well with the result observed in \cite{DK} (see Fig. 4 (c) in \cite{DK}). However, when $N$ becomes sufficiently large, such bistability feature seems not apparent as shown in (c) and (d).
\section{A paradox on strong Allee effect}
In the following, we study the stochastic asymptotic dynamics of model \eqref{1-1} as the total population size $N\to\infty.$ As Theorem~\ref{th3} will show, stochastic strong Allee effect disappears as $N\to\infty.$ Stochasticity disappears when the population size is infinity and thus such model should correspond to the deterministic model. However, as the result will demonstrate, there is only one possible fate for the species: to die out or to survive and such fate is not determined by the initial population density but by an inherent constant of the deterministic model. Such biological conclusion is quite different from what the deterministic ODE model \eqref{ode} interprets. In this way, a paradox on strong Allee effect occurs. This gives another example to illustrate the diffusion theory's dilemma as explained in the Introduction.

Beforehand, we first define \textit{Markov exponent}, \textit{stochastic asymptotic extinction} and \textit{stochastic asymptotic persistence} of model \eqref{1-1}.

\begin{definition}
  Let $\{i_N\},\ \{j_N\}\subset\mathbb{N}$ {\rm(}i.e., $\{i_N\}$ and $\{j_N\}$ are two subsequences of the set of natural numbers{\rm)} satisfy $0\le i_N,\ j_N\le N.$ We call $M(i_N,j_N)$ the Markov exponent of $\{i_N\}$ to $\{j_N\}$  if $M(i_N,j_N):=\lim_{N\to\infty}\frac{1}{N}\log\frac{p^s_{i_N}}{p^s_{j_N}}$ exists. Obviously, $M(j_N,i_N)=-M(i_N,j_N)$ if existing. If $M(i_N,j_N)>0,$ then $p^s_{j_N}$ decays exponentially faster than $p^s_{i_N}.$ Markov exponent measures the relative exponential decay rate of one sequence of probabilities to the other.
\end{definition}

%We call $\{i_N\}$ an asymptotic increasing subsequence of $\mathbb{N}$ satisfying $0\le k_N\le N,$ if there exists $N_0\in\mathbb{N}$ such that $\{i_N\}_{N\ge N_0}$ is increasing.
\begin{definition}
  Let $\{k_N\}\subset\mathbb{N}$ satisfy $0\le k_N\le N.$
   \begin{enumerate}
\item[{\rm(1)}] The population is \textit{stochastically asymptotically weakly {\rm(}strongly{\rm)} extinct} if  $$\limsup_{N\to\infty}\frac{p^*_{k_N}}{p^*_{i_N}}=\infty\  {\rm(}\liminf_{N\to\infty}\frac{p^*_{k_N}}{p^*_{i_N}}=\infty{\rm)}\ \text{and}\ \lim_{N\to\infty}\frac{k_N}{N}=0,$$ for all $\{i_N\}\subset\mathbb{N}$ satisfying $0\le i_N\le N$ and $\lim_{N\to\infty}\frac{k_N}{i_N}\in(0,1)\cup(1,\infty).$
     \item[{\rm(2)}]  The population is \textit{stochastically asymptotically weakly {\rm(}strongly{\rm)} persistent} if  $$\limsup_{N\to\infty}\frac{p^*_{k_N}}{p^*_{i_N}}>1\ {\rm(}\liminf_{N\to\infty}\frac{p^*_{k_N}}{p^*_{i_N}}>1{\rm)}\ \text{and}\ \lim_{N\to\infty}\frac{k_N}{N}>0,$$ for all $\{i_N\}\subset\mathbb{N}$ satisfying $0\le i_N\le N$ and $\lim_{N\to\infty}\frac{k_N}{i_N}\in(0,1)\cup(1,\infty).$
        \end{enumerate}
\end{definition}
\vskip 0.2cm
For $a\in [0, 1],$ denote the Dirac
delta measure at $a$  by $\delta_a$.
Next we state the following stochastic limit threshold theorem with respect to the stochastic asymptotic extinction and persistence for model \eqref{1-1}.

For a fixed $N,$ a random variable $Y_N$ taking values in $\{i/N\}_{i=0}^N$ is called a \textit{stationary population density} of model \eqref{1-1} if $\mathbb{P}(Y_N=i/N)=p_i^s.$ We use  $p^s_d$ to denote the distribution of stationary population density of model \eqref{1-1}. In the following, we do not distinguish two random variables having the same distribution $p^s_d.$ In other words, the stationary population density is unique in this sense.
\vskip 0.2cm
\begin{theorem}[\textbf{Stochastic Limit Threshold Theorem}]\label{th3}\hfill

Assume $\alpha_0(N)>0.$ Let  \begin{equation}
  \label{f}f(x)=\frac{R_0(1-\delta_1 x)}{1+\delta_2x+\frac{\delta_3\theta}{\theta+x}}.
\end{equation}
\begin{enumerate}
\item[{\rm (a)}] Suppose that $\int_0^{x_+^*}\log f(x)dx<0.$  Then
the stationary population density $Y_N$ converges to $0$ in distribution as $N\to \infty;$ or equivalently the species eventually goes extinct with probability one. In particular, the population is stochastically asymptotically strongly extinct.
\item[{\rm (b)}] Suppose that $\int_0^{x_+^*}\log f(x)dx>0$ and $\lim_{N\to\infty}\frac{1}{N}\log \alpha_0(N)=0,$
then stationary population density $Y_N$ converges to $x_+^*$ in distribution as $N\to \infty;$ or equivalently the species eventually persists at the deterministic carrying capacity level with probability one. In particular, the population is stochastically asymptotically strongly persistent.
\end{enumerate}
\end{theorem}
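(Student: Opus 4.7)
The plan rests on a Laplace-type asymptotic analysis of the explicit product formula \eqref{3-2}. Introduce the potential
\begin{equation*}
V(x) := \int_0^x \log f(y)\,dy, \qquad x \in [0,1],
\end{equation*}
so that $V'(x) = \log f(x)$. The cubic analysis already carried out in the proof of Theorem~\ref{th1} shows that the only zeros of $f - 1$ in $[0, 1]$ are $x_-^*$ and $x_+^*$, with $f < 1$ on $[0, x_-^*) \cup (x_+^*, 1]$ and $f > 1$ on $(x_-^*, x_+^*)$. Hence $V$ is strictly decreasing on $[0, x_-^*]$, strictly increasing on $[x_-^*, x_+^*]$, and strictly decreasing on $[x_+^*, 1]$; on $[0, 1]$ its global maximum is attained either at $0$ (value $0$) or at $x_+^*$ (value $\int_0^{x_+^*}\log f$). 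The hypotheses of (a) and (b) are exactly the conditions that single out $0$ or $x_+^*$ respectively as the unique global maximizer.

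The technical core is the uniform asymptotic
\begin{equation*}
\frac{1}{N}\log\frac{p_i^s}{p_0^s} = V(i/N) + \frac{\log r_0}{N} + o(1), \qquad r_0 := \frac{p_1^s}{p_0^s},
\end{equation*}
valid for $i$ with $i/N$ bounded away from $0$. Starting from $\log(p_i^s/p_0^s) = \sum_{j=0}^{i-1}\log r_j$ with the ratios $r_j$ given by \eqref{ppi}, I split the sum at a cutoff $J = \lfloor\sqrt N\rfloor$. On the bulk range $J \leq j \leq i-1$, the immigration numerator $R_{1j}(N-j)/N$ is $O(1)$ while the logistic birth term $R_0 j(1-\delta_1 j/N)$ is of order $j$, so $r_j = f(j/N)\bigl(1 + O(1/j) + O(1/N)\bigr)$ uniformly; a Riemann-sum estimate converts $\sum_{j=J}^{i-1}\log r_j$ into $N\int_0^{i/N}\log f(y)\,dy + o(N)$. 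On the boundary range $1 \leq j \leq J-1$, assumptions (H) and (A1) bound $r_j$ uniformly above and below by positive constants, so the contribution is $O(J) = o(N)$. The only remaining $N$-dependent boundary term is $\log r_0 = \log\alpha_0(N) + O(\log N)$; the hypothesis $N^{-1}\log\alpha_0(N)\to 0$ in case (b) guarantees $\log r_0 = o(N)$.

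With this asymptotic in hand, concentration is proved by a Laplace argument applied to $\sum_{i=0}^N p_i^s = 1$. In case (a), $V(x) \leq -c(\epsilon) < 0$ for $x \in [\epsilon, 1]$ and any $\epsilon > 0$, while $r_0 \leq 1$ by (H) and $p_0^s \leq 1$, so
\begin{equation*}
\mathbb{P}(Y_N \geq \epsilon) \leq \sum_{i \geq \epsilon N} p_0^s\, e^{NV(i/N) + o(N)} \leq N e^{-c(\epsilon) N + o(N)} \to 0,
\end{equation*}
which gives $Y_N \Rightarrow \delta_0$. In case (b), the hypothesis $V(x_+^*) > 0$ forces the partition sum $(p_0^s)^{-1} = \sum_i e^{\log(p_i^s/p_0^s)}$ to be dominated by terms near $i = x_+^* N$, so $p_0^s = e^{-NV(x_+^*) + o(N)}$; for $|x - x_+^*| \geq \epsilon$ one has $V(x) \leq V(x_+^*) - c(\epsilon)$, and the same telescoping yields $\mathbb{P}(|Y_N - x_+^*| \geq \epsilon) \to 0$, i.e.\ $Y_N \Rightarrow \delta_{x_+^*}$. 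The stochastic asymptotic strong extinction/persistence claims then follow from the ratio asymptotic $p_{k_N}^s/p_{i_N}^s = \exp\bigl(N[V(\lim k_N/N) - V(\lim i_N/N)] + o(N)\bigr)$.

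The principal obstacle is controlling the Riemann-sum approximation uniformly down to the boundary $j/N \to 0$, where the logistic birth term loses its dominance and the immigration term becomes relatively large. A careful cutoff between bulk and boundary, together with the positive uniform lower bound on $r_j$ coming from (A1), isolates the $\log r_0$ contribution cleanly; the hypothesis $N^{-1}\log\alpha_0(N) \to 0$ in (b) is precisely what prevents a vanishingly small immigration rate from producing an artificial exponential peak at $i = 0$ that would compete with the carrying-capacity peak at $i \approx x_+^* N$.
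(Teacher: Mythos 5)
Your proposal is correct and follows essentially the same route as the paper: the paper's Appendix lemma performs exactly your Riemann-sum computation of $\frac{1}{N}\log(p_i^s/p_0^s)$ (including the isolation of the $\log\alpha_0(N)$ boundary term, which is why the hypothesis $N^{-1}\log\alpha_0(N)\to0$ appears only in case (b)), and the main proof then concludes by the same exponential union bound. The only organizational difference is that you establish the exponent $V(i/N)$ uniformly in $i$ away from the boundary, whereas the paper computes it only at $i_+$ and uses the monotonicity structure from Theorem~\ref{th1} to control the remaining indices.
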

\begin{proof}
  \begin{enumerate}
\item[(a)]. Suppose $\int_0^{x_+^*}\log f(x)dx<0.$ For small $\varepsilon\in(0,x_-^*)$, by Theorem~\ref{th1}, \eqref{M}, and proof of Lemma~\ref{le1} in the Appendix on the Markov exponent of $i_+$ to $0,$ we have there exists $N_0,$ such that $\forall\ N>N_0,$
    $$p_i^s<e^{N\cdot\frac{1}{2}\max\{\int_0^{\varepsilon}\log f(x)dx,\ \int_0^{x_+^*}\log f(x)dx\}}p_0^s,\ \forall\ i=\lfloor\varepsilon N\rfloor$$
$$
\mathbb{P}[Y_N>\varepsilon]<\sum_{i=\lfloor\varepsilon N\rfloor}^Np_i^s<Ne^{N\cdot\frac{1}{2}\max\{\int_0^{\varepsilon}\log f(x)dx,\ \int_0^{x_+^*}\log f(x)dx\}}p_0^s\rightarrow0,
$$as $N\rightarrow\infty$,
and therefore the stationary population density $Y_N$ converges to $0$ in probability and thus in distribution by Portmanteau theorem. For a detailed statement of Portmanteau lemma, we refer the reader to Lemma~2.2 on  p.6 in \cite{Van}.
\vskip 0.2cm
\item[(b)]. Suppose $\int_0^{x_+^*}\log f(x)dx>0.$ As in (a), we can show that for small $\varepsilon>0,$ there exists $N_0,$ such that $\forall\ N>N_0,$
\[\begin{split}
  &\mathbb{P}[|Y_N-x_+^*|>\varepsilon]\\
  <&Ne^{N\cdot\frac{1}{2}\max\{-\int_0^{x_+^*}\log f(x)dx,\ -\int_{x_+^*-\varepsilon}^{x_+^*}\log f(x)dx,\ \int_{x_+^*}^{x_+^*+\varepsilon}\log f(x)dx\}}p_{i_+}^s\rightarrow0,
\end{split}
\]
$\text{as $N\rightarrow\infty$.}$ This implies that  the stationary population density $Y_N$ converges to $x_+^*$ in distribution.
\end{enumerate}
Now we complete the proof of the Theorem~\ref{th3}.
\end{proof}
\begin{figure}
\centering
    \subfigure[$N=100.$]{\includegraphics[width=2.1in]{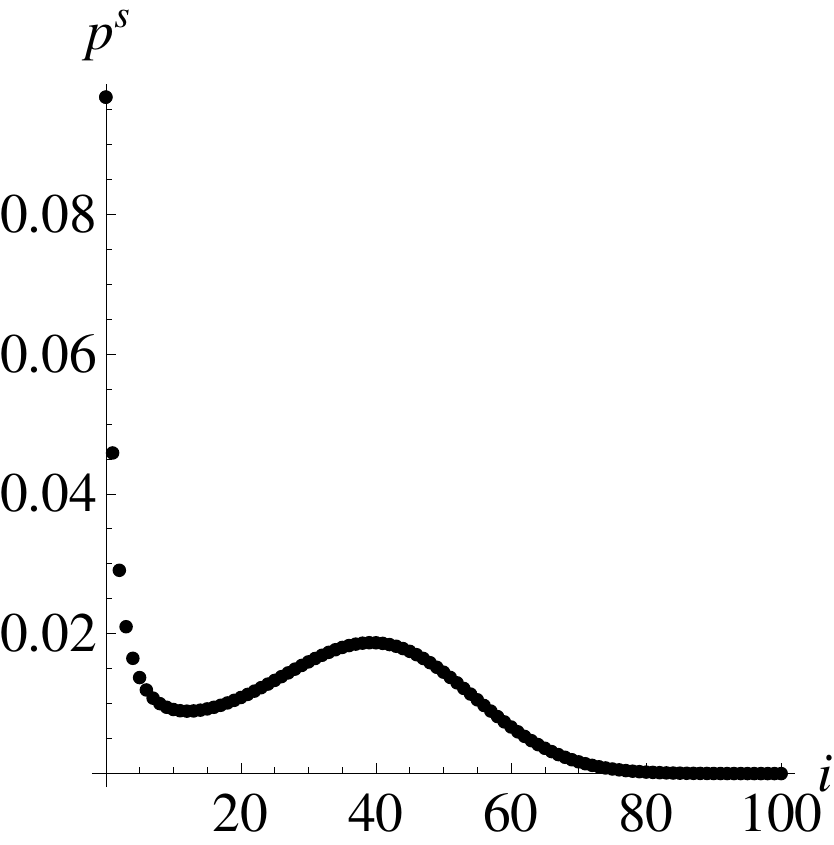}}\hspace{1cm}
    \subfigure[$N=100.$]{\includegraphics[width=2.1in]{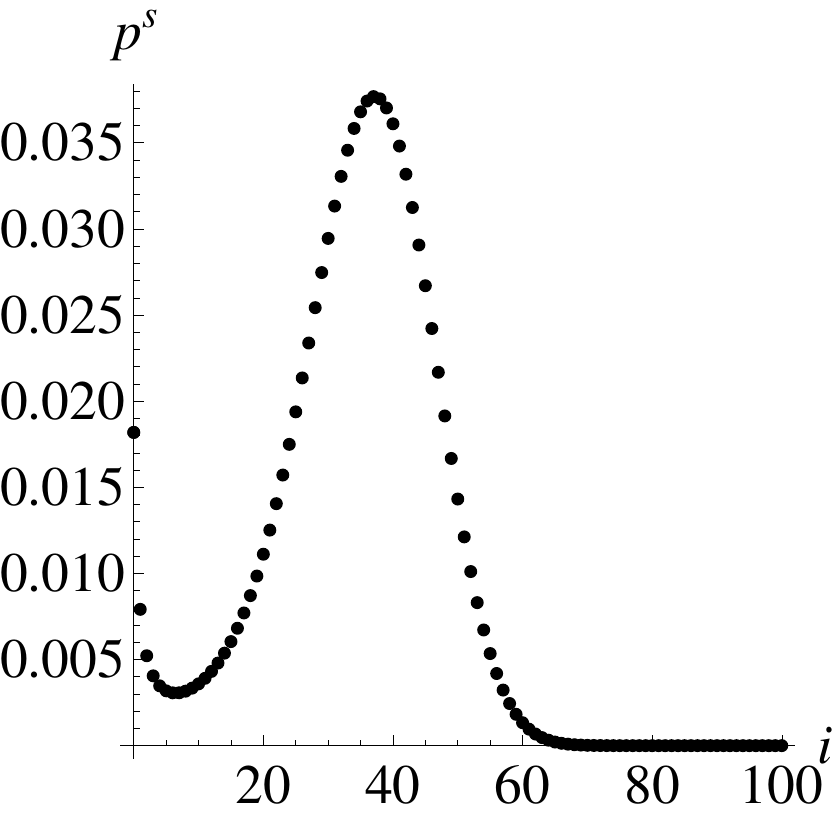}}\\
    \subfigure[$N=500.$]{\includegraphics[width=2.1in]{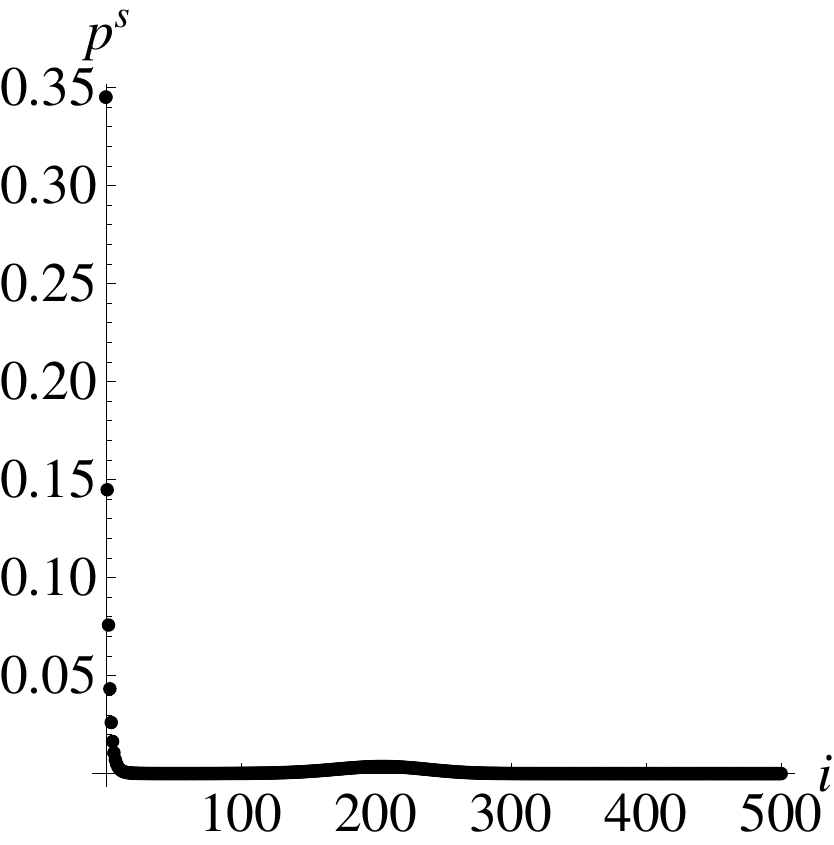}} \hspace{1cm}
    \subfigure[$N=500.$]{\includegraphics[width=2.1in]{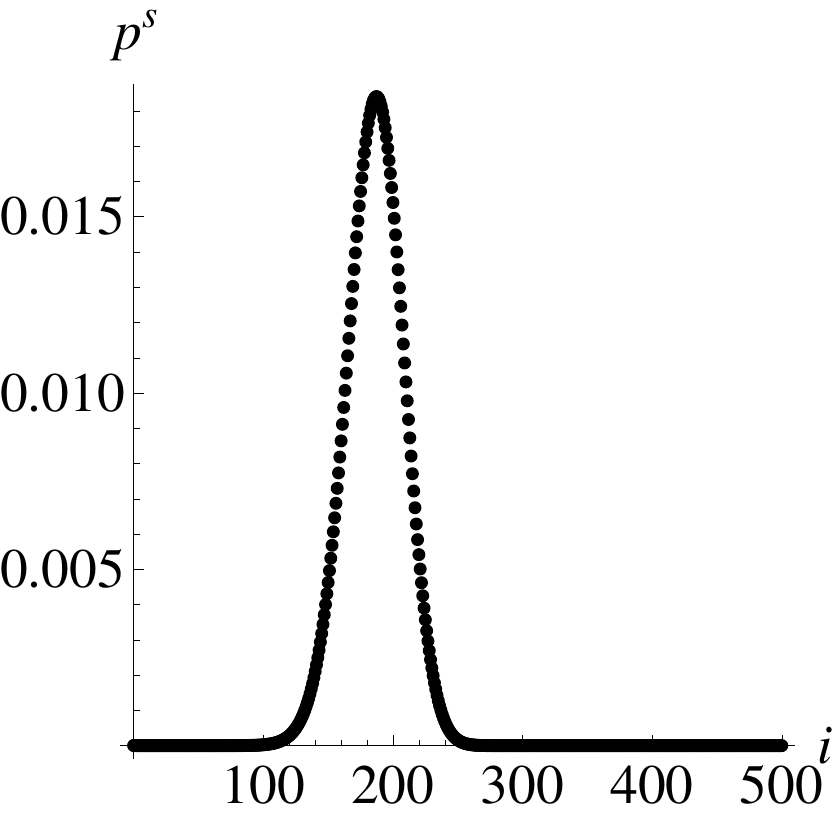}}
   \caption{\small Profiles of the positive stationary distribution $p^s.$ In {\rm (a)} an {\rm (c)}, $R_0=1.4,$ $\delta_1=0.45,$ $\delta_2=0.1,$ $\delta_3=1.45,$ $\theta=0.03,$ $R_{1i}=0.99,$ for $i=0,\cdots,N-1;$ in {\rm (b)} and {\rm (d)}, $R_0=1.7,$ $\delta_1=0.9,$ $\delta_2=0,$ $\delta_3=1.7,$ $\theta=0.03,$ $R_{1i}=0.99,$ for $i=0,\cdots,N-1.$ In {\rm (a)} and {\rm (b)}, bimodal profile of the positive stationary distribution $p^s$ is clearly captured. However, in {\rm (c)} and {\rm (d)}, the distinctive bimodal features of the two profiles become less prominent such that stochastic strong Allee effect seems to disappear as the total population size increases.}\label{fig1}
\end{figure}

\vskip 0.2cm

\begin{figure}
\centering
     \subfigure[$N=5000$ and $\int_0^{y_+^*}\log f(x)dx<0.$]{\includegraphics[width=2.1in]{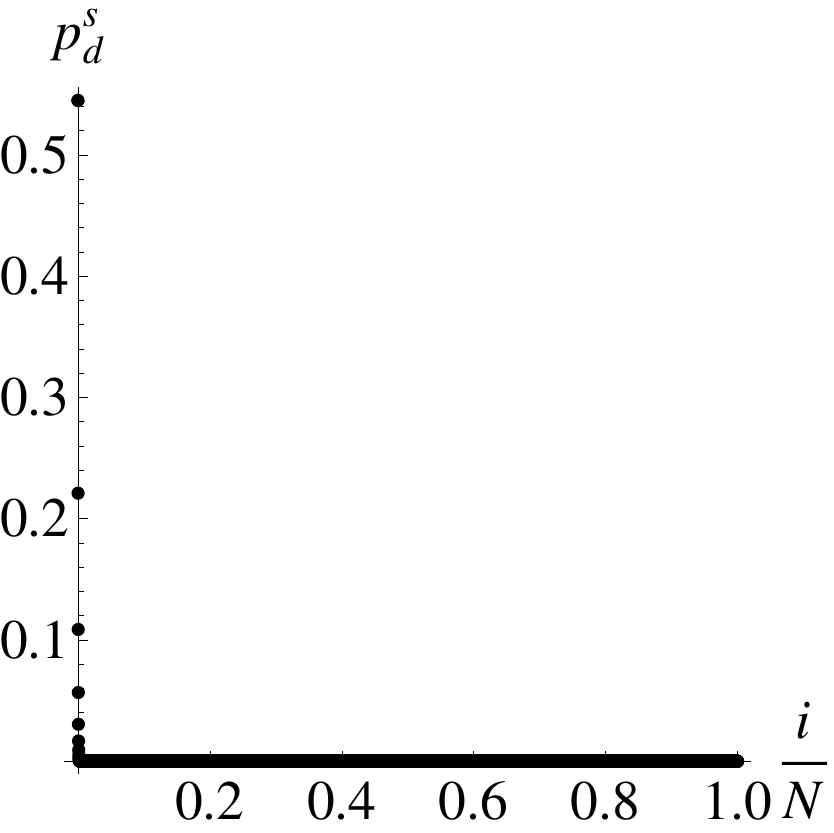}} \hspace{1cm}
    \subfigure[$N=5000$ and $\int_0^{y_+^*}\log f(x)dx>0.$]{\includegraphics[width=2.1in]{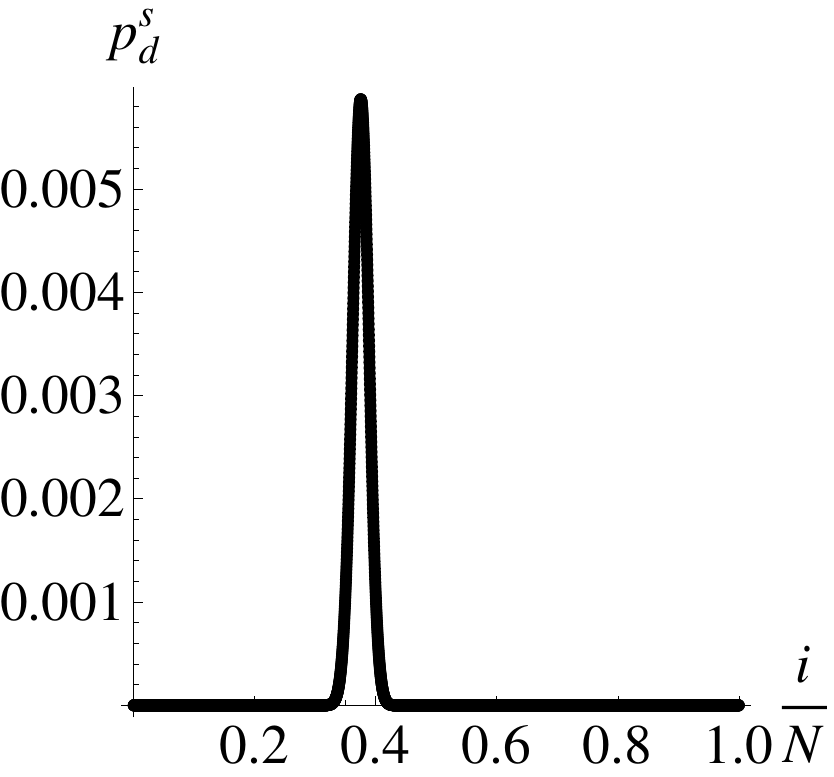}}
\caption{\small Profiles of distribution $p^s_d$ of the stationary population density. In {\rm (a)},  $R_0=1.4,$ $\delta_1=0.45,$ $\delta_2=0.1,$ $\delta_3=1.45,$ $\theta=0.03,$ $R_{1i}=0.99,$ for $i=0,\cdots,N-1;$ $x_+^*=0.413621,$ $x_-^*=0.104324,$ $\int_0^{x_+^*}\log f(x)dx=-0.00611319.$ The probability
density is highly concentrated in a neighbourhood of $0.$ In {\rm (b)}, $R_0=1.7,$ $\delta_1=0.9,$ $\delta_2=0,$ $\delta_3=1.7,$ $\theta=0.03,$ $R_{1i}=0.99,$ for $i=0,\cdots,N-1;$ $x_+^*=0.375266,$ $x_-^*=0.0522505,$ $\int_0^{x_+^*}\log f(x)dx=0.0207001.$ The probability
density is highly concentrated in a neighbourhood of $x_+^*.$} \label{fig2}
\end{figure}

\vskip 0.2cm

Theorem~\ref{th3} states that, as the population size $N\to \infty,$ the distribution of the stationary population density has a limit, which exhibits a sharp threshold result.  Theorem~\ref{th3} also indicates that for large population size $N,$ whether the population is more likely to go extinct or to persist only depends on the critical parameter $\int_0^{x_+^*}\log f(x)dx.$ Such a result shows that a species will either survive with probability one or die out with probability one and whether a species will survive or die out is determined by some inherent constant instead of the initial population density. This interprets differently from the deterministic model \eqref{ode}. Hence a paradox on strong Allee effect occurs.%Furthermore, as long as $\int_0^{y_+^*}\log f(x)dx\neq0,$ there is only one globally stable equilibrium (while the other two equilibria are unstable) with probability 1. This interprets bistability for an ODE system differently from the conventional way that two locally stable equilibria coexist with disjoint basins of attraction separated by the unstable equilibrium.\
%The critical parameter $\int_0^{x_+^*}\log f(x)dx$ plays a similar role as the critical density $x_-^*$ of the deterministic model \eqref{ode}.

Nevertheless, it is still possible that as $N\to\infty,$ the discrete measure $\sum_{i=1}^Np_i^s\delta_{i/N}$ associated with the stationary population density converges to a convex combination of the two Dirac delta measures $\delta_0$ and $\delta_{x_+^*}$ which is also singular to the Lebesgue measure when $\int_0^{x_+^*}\log f(x)dx=0.$

Results of Theorem~\ref{th3} are illustrated
in Figure~\ref{fig2}. We comment that for our results to hold,  especially for statement (b) in Theorem~\ref{th3}, stochastic immigration $\alpha_0(N)$ does not have to be very big; it is only
required not to decay exponentially fast as the population size $N\to\infty.$

Note that even if $\alpha_i=0$ for $i\neq0,$ main results in this paper still hold. However if $\alpha_0=0,$ then state $i=0$ is absorbing and the stationary distribution is degenerate, meaning that the population goes extinct with probability one. Mathematically, the seemingly negligible stochastic immigration ($\alpha_0\neq0$) at state $i=0$ that destroys the absorbing state of the Markov chain triggers the stochastic strong Allee effect. Nevertheless, this immigration, though may affect the outcome of the strong Allee effect (as we will illustrate in the following), is not the cause (which is the mate limitation) of the strong Allee effect in the deterministic model.

For large $\alpha_i,$ that is, the immigration is not taken to be noise, the corresponding deterministic counterpart of the Markov chain model is

\begin{equation}\label{ode1}
   \frac{dX}{dt}=\lambda X\Big(1-\delta_1X/N\Big)+\alpha(N-X)-\mu X\Big[1+\delta_2X/N+\delta_3\theta/(\theta+X/N)\Big]
 \end{equation}%Moreover, since whether the population goes extinct or survives only depends on a threshold parameter, the claim that demographic noise enhances the chance of extinction is not valid for this simple ecological model.
and the ODE for $x=X/N$ is
\begin{equation}\label{ode3}
   \frac{dx}{dt}=\lambda x\Big(1-\delta_1x\Big)+\alpha(1-x)-\mu x\Big[1+\delta_2x+\delta_3\theta/(\theta+x)\Big].
 \end{equation}
 Here the immigration rate $\alpha$ can be viewed as the average of $\alpha_i.$ Note for $\alpha=0,$ \eqref{ode3} admits three equilibria $0<x_-^*<x_+^*$ with $0$ and $x_+^*$ being stable and $x_-^*$ unstable. Also $x$ is an equilibrium of \eqref{ode3} if and only if $$(\lambda\delta_1+\mu\delta_2)x^3+[\theta(\lambda\delta_1+\mu\delta_2)+\alpha+\mu-\lambda)x^2+[\theta(\mu\delta_3+\alpha+\mu-\lambda)-\alpha]x-\alpha\theta=0.$$Thus by continuous dependence, we know there exists $\alpha_0>0$ such that $\forall\ 0<\alpha<\alpha_0,$ \eqref{ode3} admits two stable positive equilibria $x_2<x_3$ and one unstable negative equilibrium $x_1.$ In other words, $x_3$ is the unique stable positive equilibrium. Thus for the new stochastic model with large stochastic immigration, one should expect a unimodal PSD with the unique peak around $x_3.$ This indicates that when immigration effect is strong enough, the nonnegative solution of \eqref{ode1} will always converge to the carrying capacity equilibrium and strong Allee effect disappears, for there is always population moving in.

\section{Conclusion}

In this paper, we formulate a stochastic logistic model with mate limitation and stochastic immigration in term of a CTMC and investigate strong Allee effect in this model. For the associated master equation, a unique positive stationary distribution (PSD) exists and is globally asymptotically stable. The PSD has a bimodal profile and thus the population can both go extinct and persist with positive probability for finite population size; in other words, a stochastic strong Allee effect exists in this stochastic model. We further prove that such stochastic strong Allee effect disappears and a threshold result holds as the population size tends to infinity: the population goes extinct with probability one if a critical parameter is below 0, while persists at the carrying capacity with probability one if the critical value is above 0. In other words, there is only one possible fate if the model is described deterministically (when the population size is assumed to be infinity), and whether the population finally survives or dies out (with probability one) does not depend on the initial density but on an inherent parameter determined by birth rate, death rate and mate limitation. Such threshold phenomenon is inconsistent with the classical bistable result for this logistic model \eqref{ode2} with mate limitation and a paradox on strong Allee effect occurs which illustrates the diffusion theory's dilemma.

\section*{Acknowledgements}

The author is indebt to two anonymous referees' valuable comments and suggestions which greatly improve the presentation of this paper. The author would also like to thank Dr. J.R. Chazottes for bringing reference \cite{CCM} to his attention.

\section*{Appendix: Markov Exponent of $i_+$ to $0$}\hfill

\vskip 0.2cm
\noindent
%To prove Theorem~\ref{th3}, we need the following lemma.
\begin{lemma}
  \label{le1}
  The Markov exponent $M(i_+,0)$ of $i_+$ to $0$ is given by
   \begin{equation}
     \label{M}
     M(i_+,0)=\int_0^{x_+^*}\log f(x)dx,
   \end{equation}where $f(x)$ is defined in \eqref{f} and $i_+=i_+(N)$ is defined in Theorem~\ref{th1}.
\end{lemma}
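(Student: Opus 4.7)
The plan is to start from the explicit product representation of the PSD given in Theorem~\ref{th10}, take logarithms, and recognize the resulting sum as a Riemann sum that converges to $\int_0^{x_+^*}\log f(x)\,dx$. From Theorem~\ref{th10} we have
\[
\frac{p_{i_+}^s}{p_0^s}=\prod_{j=0}^{i_+-1}g_N(j),\qquad g_N(j):=\frac{R_0j(1-\delta_1j/N)+R_{1j}(N-j)/N}{(j+1)\Bigl[1+\frac{\delta_2(j+1)}{N}+\frac{\delta_3\theta}{\theta+(j+1)/N}\Bigr]},
\]
so that
\[
M(i_+,0)=\lim_{N\to\infty}\frac1N\log\frac{p_{i_+}^s}{p_0^s}=\lim_{N\to\infty}\frac1N\sum_{j=0}^{i_+-1}\log g_N(j).
\]
I will split this sum into the isolated term $j=0$ and the bulk $1\le j\le i_+-1$.

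First I would dispatch the $j=0$ term. Since $g_N(0)=R_{10}/[1+\delta_2/N+\delta_3\theta/(\theta+1/N)]$ and $R_{10}=N\alpha_0(N)/\mu$, we have $\frac1N\log g_N(0)=\frac1N\log R_{10}+O(1/N)=\frac{\log N}{N}+\frac{\log\alpha_0(N)}{N}-\frac{\log\mu}{N}+O(1/N)$, which tends to $0$ under the standing assumption $\lim_{N\to\infty}\frac1N\log\alpha_0(N)=0$ (this is the mild regularity hypothesis carried along from Theorem~\ref{th3}(b)). For $j\ge1$ I would write $x=j/N$ and factor
\[
g_N(j)=\frac{j}{j+1}\cdot\frac{R_0(1-\delta_1x)+R_{1j}(1-x)/j}{1+\delta_2(x+1/N)+\delta_3\theta/(\theta+x+1/N)},
\]
from which a short Taylor expansion in each of the three factors yields
\[
\log g_N(j)=\log f(x)+\delta_N(j),\qquad |\delta_N(j)|\le \frac{C_1}{j}+\frac{C_2}{N},
\]
where $C_1,C_2$ depend only on $R_0,\delta_1,\delta_2,\delta_3,\theta$ (using that $R_{1j}\in[0,1]$ and that $f$ is bounded away from $0$ on $[0,x_+^*]$ by assumptions (A1)--(A2) and $\delta_1\le 1$). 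Hence $\sum_{j=1}^{i_+-1}|\delta_N(j)|=O(\log N)+O(1)$, so after dividing by $N$ the error contribution vanishes.

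It then remains to prove $\frac1N\sum_{j=1}^{i_+-1}\log f(j/N)\to\int_0^{x_+^*}\log f(x)\,dx$. This is a standard left-Riemann sum with mesh $1/N$; continuity of $\log f$ on the compact interval $[0,x_+^*]$ (which follows from $f(x)>0$ there, since $\delta_1x_+^*\le\delta_1\le1$ with $f(x_+^*)=1$ by definition of the deterministic equilibrium) gives Riemann integrability. The fact that $i_+/N=x_+^*+O(1/N)$ from Theorem~\ref{th1} only shifts the upper limit by $O(1)$ indices, which contributes at most $O(1/N)\cdot\|\log f\|_\infty\to0$. Combining all three pieces yields $M(i_+,0)=\int_0^{x_+^*}\log f(x)\,dx$.

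The step I expect to be the main obstacle is the transition regime of small but positive $j$, where both the immigration term $R_{1j}(N-j)/N$ and the birth term $R_0j(1-\delta_1j/N)$ are of the same order and the naive approximation $g_N(j)\approx f(j/N)$ first becomes valid. Controlling $\delta_N(j)$ uniformly by $C_1/j+C_2/N$ on $j\ge1$, and verifying that $\sum_{j\ge 1}1/j=O(\log N)$ is a negligible fraction of $N$, is the delicate ingredient that makes the Riemann-sum limit rigorous. Once that uniform error bound is in hand, the rest is routine calculus.
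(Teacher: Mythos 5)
\noindent Your proposal is correct and follows essentially the same route as the paper's proof: both convert the product formula into $\frac1N\sum\log g_N(j)$, dispose of the $j=0$ term via the assumption $\lim_{N\to\infty}\frac1N\log\alpha_0(N)=0$, show the immigration perturbation for $j\geqslant1$ contributes only a harmonic-type sum of order $\frac{\log N}{N}$, and identify the remaining sum as a Riemann sum for $\int_0^{x_+^*}\log f(x)\,dx$. The only difference is bookkeeping: the paper sandwiches the bulk sum between an explicit lower bound (dropping the immigration term and telescoping) and an upper bound (using $\delta_1\leqslant1$ to factor out $\log(1+\frac{R_{1j}}{R_0 j})$), and treats the indices between $\lfloor x_+^*N\rfloor$ and $i_+$ as a separately bounded tail, whereas you use a single two-sided error estimate $|\delta_N(j)|\leqslant C_1/j+C_2/N$ and absorb the endpoint shift via $i_+/N=x_+^*+O(1/N)$ --- both are valid.
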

\begin{proof}
Note that $$\frac{p_i^s}{p_0^s}=\prod_{j=0}^{i-1}\frac{R_0j(1-\delta_1j/N)+R_{1j}(N-j)/N}{(j+1)\Big[1+\frac{\delta_2(j+1)}{N}+\frac{\delta_3\theta}{\theta+(j+1)/N}\Big]}$$
and $$\lim_{N\to\infty}\frac{i_+}{N}=x_+^*.$$  %Let $f^{(j)}_N(x):=\frac{R_0x[1-\delta_1x]+R_{1j}(1-x)/N}{(x+1/N)\Big[1+\delta_2(x+1/N)+\frac{\delta_3\theta}{\theta+x+1/N}\Big]}.$  Note $f_N^{(j)}-f(j/N)\to0,$ uniformly for all $1\leqslant j\leqslant N,$ as $N\to\infty,$ where $f(x)=\frac{R_0(1-\delta_1x)}{1+\delta_2x+\frac{\delta_3\theta}{\theta+x}}.$
%Let $0=s_0<s_1<\cdots<s_{\lfloor x_+^*N\rfloor-1}<s_{\lfloor x_+^*N\rfloor}=x_+^*$  be a partition of $[0,x_+^*],$ where $s_j=j/N,$ for $j=0,\cdots,\lfloor x_+^*N\rfloor-1.$
If $i_+>\lfloor x_+^*N\rfloor,$ then  %$\frac{p_{i_N}^*}{p_0^*}=\Big(\prod_{j=0}^{[aN]-1}f^{(j)}(x_j)\Big)\Big(\prod_{j=[aN]}^{i_N-1}f^{(j)}(j/N)\Big).$ Hence
\[\begin{split}
  \frac{1}{N}\log\frac{p_{i_+}^s}{p_0^s}=&\frac{1}{N}\log\Big(\frac{R_{10}}{1+\delta_2\frac{1}{N}+\frac{\delta_3\theta}{\theta+\frac{1}{N}}}\Big)+\frac{1}{N}\sum_{j=1}^{\lfloor x_+^*N\rfloor-1}\log \frac{R_0\frac{j}{N}(1-\delta_1\frac{j}{N})+\frac{R_{1j}}{N}(1-\frac{j}{N})}{\frac{j+1}{N}\Big[1+\delta_2\frac{j+1}{N}+\frac{\delta_3\theta}{\theta+\frac{j+1}{N}}\Big]}\\+&\frac{1}{N}\sum_{j=\lfloor x_+^*N\rfloor}^{i_+-1}\log \frac{R_0\frac{j}{N}(1-\delta_1\frac{j}{N})+\frac{R_{1j}}{N}(1-\frac{j}{N})}{\frac{j+1}{N}\Big[1+\delta_2\frac{j+1}{N}+\frac{\delta_3\theta}{\theta+\frac{j+1}{N}}\Big]}.
\end{split}\]  By $\lim_{N\to\infty}\frac{1}{N}\log \alpha_0(N)=0,$ we have \begin{equation}
  \label{5-1}\lim_{N\to\infty}\frac{1}{N}\log\Big(\frac{R_{10}}{1+\delta_2/N+\frac{\delta_3\theta}{\theta+1/N}}\Big)=0.
\end{equation}  By assumption ({\rm H}), as $N\to\infty,$
\begin{equation}
  \label{5-2}\frac{1}{N}\Big|\sum_{j=\lfloor x_+^*N\rfloor}^{i_+-1}\log \frac{R_0\frac{j}{N}(1-\delta_1\frac{j}{N})+\frac{R_{1j}}{N}(1-\frac{j}{N})}{\frac{j+1}{N}\Big[1+\delta_2\frac{j+1}{N}+\frac{\delta_3\theta}{\theta+\frac{j+1}{N}}\Big]}\Big|\leqslant\frac{i_+-\lfloor x_+^*N\rfloor}{N}\log\Big(R_0+1\Big)\to0.
\end{equation}
Note that on one hand,
\[
\begin{split}
  &\frac{1}{N}\sum_{j=1}^{\lfloor x_+^*N\rfloor-1}\log \frac{R_0\frac{j}{N}(1-\delta_1\frac{j}{N})+\frac{R_{1j}}{N}(1-\frac{j}{N})}{\frac{j+1}{N}\Big[1+\delta_2\frac{j+1}{N}+\frac{\delta_3\theta}{\theta+\frac{j+1}{N}}\Big]}\\
  \geqslant&\frac{1}{N}\sum_{j=1}^{\lfloor x_+^*N\rfloor-1}\log \frac{R_0\frac{j}{N}(1-\delta_1\frac{j}{N})}{\frac{j+1}{N}\Big[1+\delta_2\frac{j+1}{N}+\frac{\delta_3\theta}{\theta+\frac{j+1}{N}}\Big]}\\
  =&\frac{1}{N}\Big[\log\Big(R_0\frac{1}{N}(1-\delta_1\frac{1}{N})\Big)-\log\Big(R_0\frac{\lfloor x_+^*N\rfloor}{N}(1-\delta_1\frac{\lfloor x_+^*N\rfloor}{N})\Big)\Big]\\&+\frac{1}{N}\sum_{j=2}^{\lfloor x_+^*N\rfloor}\log \frac{R_0(1-\delta_1\frac{j}{N})}{1+\delta_2\frac{j}{N}+\frac{\delta_3\theta}{\theta+\frac{j}{N}}}.
\end{split}
\]
Since $$\lim_{N\to\infty}\frac{1}{N}\Big[\log\Big(R_0\frac{1}{N}(1-\delta_1\frac{1}{N})\Big)-\log\Big(R_0\frac{\lfloor x_+^*N\rfloor}{N}(1-\delta_1\frac{\lfloor x_+^*N\rfloor}{N})\Big)\Big]=0,$$ we have
\begin{equation}
  \label{5-3}
\begin{split}
  &\liminf_{N\to\infty}\frac{1}{N}\sum_{j=1}^{\lfloor x_+^*N\rfloor-1}\log \frac{R_0\frac{j}{N}(1-\delta_1\frac{j}{N})+\frac{R_{1j}}{N}(1-\frac{j}{N})}{\frac{j+1}{N}\Big[1+\delta_2\frac{j+1}{N}+\frac{\delta_3\theta}{\theta+\frac{j+1}{N}}\Big]}\\
  \geqslant&\liminf_{N\to\infty}\frac{1}{N}\sum_{j=1}^{\lfloor x_+^*N\rfloor-1}\log \frac{R_0\frac{j}{N}(1-\delta_1\frac{j}{N})}{\frac{j+1}{N}\Big[1+\delta_2\frac{j+1}{N}+\frac{\delta_3\theta}{\theta+\frac{j+1}{N}}\Big]}=\int_0^{x^*_+}\log f(x)dx,
\end{split}
\end{equation}
where $f(x)$ is defined in \eqref{f}.

%Hence \begin{equation}
 % \label{inf}\liminf_{N\to\infty}\frac{1}{N}\log\frac{p_{i_+}^s}{p_0^s}\geqslant\int_0^{y^*_+}\log f(x)dx.
%\end{equation}
On the other hand, by $\delta_1\leqslant1,$
\[
\begin{split}
  &\frac{1}{N}\sum_{j=1}^{\lfloor x_+^*N\rfloor-1}\log \frac{R_0\frac{j}{N}(1-\delta_1\frac{j}{N})+\frac{R_{1j}}{N}(1-\frac{j}{N})}{\frac{j+1}{N}\Big[1+\delta_2\frac{j+1}{N}+\frac{\delta_3\theta}{\theta+\frac{j+1}{N}}\Big]}\\
  \leqslant&\frac{1}{N}\sum_{j=1}^{\lfloor x_+^*N\rfloor-1}\log \frac{\Big(R_0\frac{j}{N}+\frac{R_{1j}}{N}\Big)(1-\delta_1\frac{j}{N})}{\frac{j+1}{N}\Big[1+\delta_2\frac{j+1}{N}+\frac{\delta_3\theta}{\theta+\frac{j+1}{N}}\Big]}\\
  =&\frac{1}{N}\sum_{j=1}^{\lfloor x_+^*N\rfloor-1}\Big[\log \frac{R_0\frac{j}{N}(1-\delta_1\frac{j}{N})}{\frac{j+1}{N}\Big[1+\delta_2\frac{j+1}{N}+\frac{\delta_3\theta}{\theta+\frac{j+1}{N}}\Big]}+\log\Big(1+\frac{R_{1j}}{R_0j}\Big)\Big]\\
  =&\frac{1}{N}\sum_{j=1}^{\lfloor x_+^*N\rfloor-1}\log \frac{R_0\frac{j}{N}(1-\delta_1\frac{j}{N})}{\frac{j+1}{N}\Big[1+\delta_2\frac{j+1}{N}+\frac{\delta_3\theta}{\theta+\frac{j+1}{N}}\Big]}+\frac{1}{N}\sum_{j=1}^{\lfloor x_+^*N\rfloor-1}\log\Big(1+\frac{R_{1j}}{R_0j}\Big).
\end{split}
\]
By assumption (A1), $R_0\geqslant1,$ which implies $$\frac{1}{N}\sum_{j=1}^{\lfloor x_+^*N\rfloor-1}\log\Big(1+\frac{R_{1j}}{R_0j}\Big)\leqslant\frac{1}{N}\sum_{j=1}^{\lfloor x_+^*N\rfloor-1}\log\Big(1+\frac{1}{j}\Big)=\frac{\log\lfloor x_+^*N\rfloor}{N}\to0,\ \mbox{as}\ N\to\infty.$$
Hence \begin{equation}
  \label{5-4}
\begin{split}
  &\limsup_{N\to\infty}\frac{1}{N}\sum_{j=1}^{\lfloor x_+^*N\rfloor-1}\log \frac{R_0\frac{j}{N}(1-\delta_1\frac{j}{N})+\frac{R_{1j}}{N}(1-\frac{j}{N})}{\frac{j+1}{N}\Big[1+\delta_2\frac{j+1}{N}+\frac{\delta_3\theta}{\theta+\frac{j+1}{N}}\Big]}\\
  \leqslant&\limsup_{N\to\infty}\frac{1}{N}\sum_{j=1}^{\lfloor x_+^*N\rfloor-1}\log \frac{R_0\frac{j}{N}(1-\delta_1\frac{j}{N})}{\frac{j+1}{N}\Big[1+\delta_2\frac{j+1}{N}+\frac{\delta_3\theta}{\theta+\frac{j+1}{N}}\Big]}=\int_0^{x^*_+}\log f(x)dx.
\end{split}
\end{equation} By \eqref{5-3} and \eqref{5-4}, we have \begin{equation}
  \label{5-5}\frac{1}{N}\sum_{j=1}^{\lfloor x_+^*N\rfloor-1}\log \frac{R_0\frac{j}{N}(1-\delta_1\frac{j}{N})+\frac{R_{1j}}{N}(1-\frac{j}{N})}{\frac{j+1}{N}\Big[1+\delta_2\frac{j+1}{N}+\frac{\delta_3\theta}{\theta+\frac{j+1}{N}}\Big]}
=\int_0^{x^*_+}\log f(x)dx.
\end{equation} By \eqref{5-1}, \eqref{5-2} and \eqref{5-5}, we  conclude that \begin{equation}
  \label{lim}\lim_{N\to\infty}\frac{1}{N}\log\frac{p_{i_+}^s}{p_0^s}=\int_0^{x^*_+}\log f(x)dx.
\end{equation}
If $i_+\leqslant\lfloor x_+^*N\rfloor,$ using similar arguments as above, we can still show that  \eqref{lim} holds.
\end{proof}

% BibTeX users please use one of
%\bibliographystyle{spmpscinat}      % basic style, author-year citations
%\bibliographystyle{spmpsci}      % mathematics and physical sciences
%\bibliographystyle{spphys}       % APS-like style for physics
%\bibliography{}   % name your BibTeX data base

% Non-BibTeX users please use

\end{document}